\newtheorem{thm}{Theorem}[section]
\newtheorem{proposition}[thm]{Proposition}
\newtheorem{claim}[thm]{Claim}
\newtheorem{conjecture}[thm]{Conjecture}
\theoremstyle{definition}
\newtheorem{remark}[thm]{Remark}
  \newtheorem{definition-remark}[thm]{Definition-Remark}
\def\geq{\geqslant}
\def\leq{\leqslant}
\def\min{\operatorname{min}}
\def\rank{\operatorname{rank}}
\def\c1{\operatorname{c_1}}
\def\c2{\operatorname{c_2}}
\def\CC{{\mathbb C}}
\def\AA{{\mathbb A}}
\def\PP{{\mathbb P}}
\def\c{\mathfrak{c}}
\def\+{\oplus}               % direct sum
\def\*{\otimes}                  % tensor product
\begin{document}

\title{On the infinitesimal Terracini lemma}

\author[C.~Ciliberto]{Ciro Ciliberto}
\address{Ciro Ciliberto, Dipartimento di Matematica, Universit\`a di Roma Tor Vergata, Via della Ricerca Scientifica, 00173 Roma, Italy}
\email{cilibert@mat.uniroma2.it}

%\keywords{}

%\subjclass{}

 \begin{abstract}  In this paper we prove an infinitesimal version of the classical Terracini Lemma for 3--secant planes to a variety. Precisely we prove that if 
 $X\subseteq \PP^r$ is an irreducible, non--degenerate, projective complex variety of dimension $n$ with $r\geq 3n+2$, such that the variety of osculating planes to curves in $X$ has the expected dimension $3n$ and for every $0$--dimensional, curvilinear scheme $\gamma$ of length 3 contained in $X$ the family of hyperplanes sections of $X$ which are singular along $\gamma$ has dimension larger that $r-3(n+1)$, then $X$ is $2$--secant defective. 
\end{abstract}

\maketitle

%{\tableofcontents%}

\vspace{-1cm}

\section{Introduction}\label{sec:intro}

The classical Terracini Lemma describes the tangent space to the secant variety of a given variety at its general point. To be precise, given $X\subseteq \PP^r$ an irreducible, projective complex variety of dimension $n$, consider the \emph{$k$--secant variety} variety ${\rm Sec}_k(X)$ of $X$, which is the Zariski closure of the union of all $k$--dimensional linear spaces which are $(k+1)$--secant to $X$ at linearly independent points. One writes ${\rm Sec}(X)$ for ${\rm Sec}_1(X)$. One has  
$$\dim({\rm Sec}_k(X))\leqslant \min\{r, kn+n+k\}$$
and 
$$\delta_k(X):=\min\{r, kn+n+k\}-\dim({\rm Sec}_k(X))$$
is called the  \emph{$k$--defect} of $X$, and $X$ is said to be \emph {k--defective}, if $\delta_k(X)>0$. 

Given an irreducible, projective variety $X\subseteq \PP^r$ of dimension $n$ and a smooth point $p\in X$, we denote by $T_{X,p}$ the \emph{(projective) tangent space} to $X$ at $p$, which is a $n$--dimensional linear subspace of $\PP^r$.

 \begin{thm}[Terracini Lemma]\label{thm:lemma terra}
 Let $X\subseteq \PP^r$ be an irreducible, projective variety of dimension $n$. Let $p_0,\ldots, p_k$ be general linearly independent points of  $X$ and let $p\in \langle p_0,\ldots, p_k\rangle$ be a general point of  ${\rm Sec}_k(X)$. One has:
$$T_{{\rm Sec}_k(X),p}=\langle T_{X,p_0},\ldots, T_{X,p_k}\rangle$$
hence $\delta_k(X)$ is the \emph{independency defect} of the linear spaces $T_{X,p_0},\ldots, T_{X,p_k}$, i.e.,
$$\delta_k(X)= \min\{r, kn+n+k\}  - \dim (\langle T_{X,p_0},\ldots, T_{X,p_k}\rangle).$$
\end{thm}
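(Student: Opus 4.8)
The plan is to realize the affine cone over ${\rm Sec}_k(X)$ as the image of an explicit parametrization and to compute the differential of that parametrization at a general point. Write $\hat X\subseteq\AA^{r+1}$ for the affine cone over $X$, and for a smooth point $p\in X$ let $v\in\hat X$ be a lift and $\hat T_{\hat X,v}\subseteq\AA^{r+1}$ the affine (embedded) tangent space; its projectivization is $T_{X,p}$, and by the Euler relation $v\in\hat T_{\hat X,v}$ since $\hat X$ is invariant under scaling. Consider the morphism
$$\Phi\colon \hat X^{\,k+1}\times\AA^{k+1}\hpil\AA^{r+1},\qquad \Phi(v_0,\dots,v_k,\lambda_0,\dots,\lambda_k)=\sum_{i=0}^k\lambda_i v_i,$$
whose image has closure the affine cone $\widehat{{\rm Sec}_k(X)}$.

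First I would compute $d\Phi$ at a point where every $v_i$ is a smooth point of $\hat X$ and the scalars $\lambda_i$ are general, hence nonzero. By the Leibniz rule the partial derivatives in the $v_i$ directions sweep out $\lambda_i\hat T_{\hat X,v_i}$, while the partial derivatives in the $\lambda_i$ directions contribute the vectors $v_i$; therefore
$$\im(d\Phi)=\sum_{i=0}^k\lambda_i\hat T_{\hat X,v_i}+\sum_{i=0}^k\CC\,v_i.$$
Since each $\lambda_i\neq0$ we have $\lambda_i\hat T_{\hat X,v_i}=\hat T_{\hat X,v_i}$ (these are linear subspaces through the origin), and since $v_i\in\hat T_{\hat X,v_i}$ the second sum is absorbed into the first, giving $\im(d\Phi)=\hat T_{\hat X,v_0}+\dots+\hat T_{\hat X,v_k}$.

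Next I would pass from the image of the differential to the tangent space of the image variety. Working over $\CC$, generic smoothness guarantees that for the dominant morphism $\Phi$ onto $\widehat{{\rm Sec}_k(X)}$ the image of $d\Phi$ at a general source point equals the affine tangent space $\hat T_{\widehat{{\rm Sec}_k(X)},\hat p}$ at the image point $\hat p=\Phi(v_0,\dots,v_k,\lambda)$. Projectivizing then yields $T_{{\rm Sec}_k(X),p}=\langle T_{X,p_0},\dots,T_{X,p_k}\rangle$, as required. The main obstacle is precisely this last transition: one must argue that a general point $p\in{\rm Sec}_k(X)$, together with general points $p_0,\dots,p_k$, arises from a general point of the source of $\Phi$, so that the equality of $\im(d\Phi)$ with the tangent space is licit. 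This is where the generality hypotheses and characteristic zero are genuinely used; concretely, one checks that the locus of source points at which $d\Phi$ attains maximal rank dominates ${\rm Sec}_k(X)$, and that the fibre of $\Phi$ through a general $\hat p$ meets this locus.

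Finally, the defect formula is immediate. The parametrization descends to a rational map $X^{\,k+1}\times\PP^k\dashrightarrow\PP^r$ with image ${\rm Sec}_k(X)$, whose source has dimension $(k+1)n+k=kn+n+k$, whence $\dim({\rm Sec}_k(X))\leq\min\{r,kn+n+k\}$. Since $p$ is a smooth point of ${\rm Sec}_k(X)$ we have $\dim T_{{\rm Sec}_k(X),p}=\dim({\rm Sec}_k(X))=\dim\langle T_{X,p_0},\dots,T_{X,p_k}\rangle$, and substituting this identity into the definition of $\delta_k(X)$ gives $\delta_k(X)=\min\{r,kn+n+k\}-\dim\langle T_{X,p_0},\dots,T_{X,p_k}\rangle$.
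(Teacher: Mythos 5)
Your proposal is a correct and complete argument, but note that the paper itself offers no proof of this statement: Theorem \ref{thm:lemma terra} is quoted as the classical Terracini Lemma (with references to Terracini's original work), so there is no internal proof to compare against. What you have written is the standard modern proof: parametrize the affine cone $\widehat{{\rm Sec}_k(X)}$ by the addition map $\Phi$ on $\hat X^{\,k+1}\times\AA^{k+1}$, compute $\im(d\Phi)=\hat T_{\hat X,v_0}+\dots+\hat T_{\hat X,v_k}$ using that tangent spaces of cones are linear subspaces through the origin containing the point itself, and then invoke generic smoothness to identify $\im(d\Phi)$ at a general source point with the tangent space of the image. Your computation of the differential is right, the absorption of the $\CC v_i$ terms via the Euler relation is right, and you correctly isolate the only delicate step, namely that generality of $(p_0,\dots,p_k,p)$ in the sense of the statement corresponds to generality in the source of $\Phi$, which is where characteristic zero enters (the lemma is false in positive characteristic precisely because generic smoothness can fail there). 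Two minor points you could make explicit: first, the closure of the image of $\Phi$ equals $\widehat{{\rm Sec}_k(X)}$ because the linearly independent tuples form a dense open subset of $\hat X^{\,k+1}$ and spans of degenerate tuples lie in limits of spans of independent ones; second, in the defect formula you should say that $p$, being a general point of ${\rm Sec}_k(X)$, is automatically a smooth point, which is what licenses $\dim T_{{\rm Sec}_k(X),p}=\dim({\rm Sec}_k(X))$. Neither is a gap of substance.
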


In the paper \cite{terra42} (see also \cite {Cili}), Terracini, extending a previous result on surfaces by C. Segre \cite {Segre21}, proved what we can consider to be an \emph{infinitesimal version} of his lemma in the case $k=1$, i.e., for secant lines. Before stating Terracini's result we introduce a definition. Let $X\subseteq \PP^{r}$ be an irreducible, non--degenerate, projective variety of dimension $n$. For every non--negative integer $m$, define ${\rm Osc}_m(X)$ as the Zariski closure of the union of all the $m$--dimensional osculating spaces to smooth curves contained in the smooth locus of $X$. This is called the \emph{variety of $m$--osculating spaces} to $X$. Of course  ${\rm Osc}_0(X)=X$ and, if $X$ is smooth, then ${\rm Osc}_1(X)$ coincides with the \emph{tangential variety} ${\rm Tan}(X)$ of $X$. One has
\begin{equation}\label{eq:oscul}
\dim({\rm Osc}_m(X))\leqslant \min\{(m+1)n,r\}.
\end{equation}
We will say that $X$ is \emph{$m$--osculating regular} if equality holds in \eqref {eq:oscul}. 

This is the result proved by Terracini in \cite{terra42} (see also \cite {Cili}):

\begin{thm} [Infinitesimal Terracini Lemma for secant lines]\label{thm:terrainf}
Let $X\subseteq \PP^r$ be an irreducible, non--degenerate, projective variety of dimension $n$, which is 1--osculating regular. 
Then $X$ is 1--defective
if and only if, given a general $0$--dimensional scheme $\gamma$ of lenght 2 in $X$, the dimension of the linear system of hyperplane sections of $X$ being singular along $\gamma$ has dimension larger than $r-2(n+1)$.\end{thm}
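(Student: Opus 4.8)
The plan is to dualize: pass to the space $\PP^{r\vee}$ of hyperplanes and read the condition ``$H\cap X$ singular along $\gamma$'' as the vanishing of the defining linear form $\ell$ on a $0$-dimensional subscheme of $X$, thereby turning the statement into a computation of a linear span that can be compared with the one furnished by the Terracini Lemma.

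First I would fix the meaning of the condition. For a length-$2$ curvilinear scheme $\gamma\subseteq X$ supported at a smooth point $p$, the section $H\cap X$ is singular along $\gamma$ precisely when $\ell|_X\in\Ii_\gamma^2$, i.e. when $H$ contains the second-order neighbourhood $\gamma^{(2)}$ of $\gamma$ in $X$, the length-$2(n+1)$ subscheme defined by $\Ii_\gamma^2$. Consequently the linear system of such $H$ has dimension $r-1-\dim\langle\gamma^{(2)}\rangle$, where $\langle\gamma^{(2)}\rangle$ is the linear span of $\gamma^{(2)}$ in $\PP^r$, and the inequality in the statement is equivalent to $\dim\langle\gamma^{(2)}\rangle<2n+1$. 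The whole theorem therefore reduces to the identity
\begin{equation*}
\dim\langle\gamma^{(2)}\rangle=\dim(\Sec(X))\qquad\text{for general }\gamma,
\end{equation*}
since, by the Terracini Lemma (Theorem \ref{thm:lemma terra} with $k=1$), $X$ is $1$-defective if and only if $\dim(\Sec(X))<\min\{r,2n+1\}$, which in the range $r\geq 2n+1$ governing the statement (where $\min\{r,2n+1\}=2n+1$) means exactly $\dim(\Sec(X))<2n+1$.

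To prove the identity I would realize $\gamma^{(2)}$ as a degeneration. Take general points $p_0,p_1\in X$ and let $p_1$ tend to $p_0$ along a general smooth curve $C\subseteq X$ whose $2$-jet at $p_0$ is $\gamma$; then the disjoint union $2p_0\sqcup 2p_1$ of the two first-order neighbourhoods is a flat family of length-$2(n+1)$ schemes whose flat limit is $\gamma^{(2)}$. By the Terracini Lemma the general span $\langle T_{X,p_0},T_{X,p_1}\rangle$ has dimension $\dim(\Sec(X))$, so by lower semicontinuity of the number of conditions that a flat family of $0$-schemes imposes on hyperplanes one gets $\dim\langle\gamma^{(2)}\rangle\leq\dim(\Sec(X))$. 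This already yields the easy implication: if $X$ is $1$-defective then $\dim(\Sec(X))\leq 2n$, whence $\dim\langle\gamma^{(2)}\rangle<2n+1$ and the linear system is larger than expected.

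The reverse inequality $\dim\langle\gamma^{(2)}\rangle\geq\dim(\Sec(X))$ is where $1$-osculating regularity is indispensable, and it is the main obstacle: one must show that in the limit the span does not drop, i.e. that the length-$2(n+1)$ scheme $\gamma^{(2)}$ still imposes as many conditions on hyperplanes as the general $2p_0\sqcup 2p_1$ does. The span $\langle\gamma^{(2)}\rangle$ is generated by $T_{X,p_0}$ together with the first-order variation of the tangent space along $C$ — that is, the image of the second fundamental form in the direction of $\gamma$ — and one further, purely second-order direction coming from the motion of the colliding point. The hypothesis that $X$ is $1$-osculating regular says that $\Tan(X)=\mathrm{Osc}_1(X)$ has the expected dimension $\min\{2n,r\}$, which for $r\geq 2n$ forces this second fundamental form to have maximal rank $n$; this nondegeneracy is exactly what guarantees that $\langle\gamma^{(2)}\rangle$ attains the dimension $\dim(\Sec(X))$ of the limiting secant span, rather than a smaller one. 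Combining the two inequalities gives the identity, and with it the equivalence asserted in the theorem. The delicate point, which I expect to require the most care, is the scheme-theoretic control of the flat limit $\gamma^{(2)}$ and the verification that $1$-osculating regularity prevents the degeneration of its span; everything else is a formal consequence of the Terracini Lemma and semicontinuity.
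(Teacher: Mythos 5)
Your dual reformulation and the forward implication are correct, and they match the paper's own framework: a hyperplane section is singular along $\gamma$ exactly when the hyperplane contains the scheme $\gamma^{(2)}$ cut out by $\Ii_\gamma^2$, so your $\langle\gamma^{(2)}\rangle$ is precisely the paper's tangent space $T_{X,\gamma}$ along $\gamma$ from Section \ref{sec:1}, and your inequality $\dim\langle\gamma^{(2)}\rangle<2n+1$ is the paper's notion of $X$ being \emph{special} along $\gamma$. Your limit computation is also right: the flat limit of $2p_0\sqcup 2p_1$ as $p_1\to p_0$ along a smooth curve with $2$-jet $\gamma$ is $\gamma^{(2)}$, and semicontinuity of the number of conditions imposed on hyperplanes gives $\dim\langle\gamma^{(2)}\rangle\leq\dim(\Sec(X))$, hence ``defective $\Rightarrow$ special'' (a direction that indeed needs no osculating regularity). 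For context: the paper does not prove Theorem \ref{thm:terrainf} at all — it quotes it from Terracini — but its proof of the $m=2$ analogue, Theorem \ref{thm:main}, exhibits the method that the other direction requires, and that is where your proposal has a genuine gap.

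The gap is the reverse implication ``special $\Rightarrow$ defective'', i.e.\ your inequality $\dim\langle\gamma^{(2)}\rangle\geq\dim(\Sec(X))$: this \emph{is} the theorem, and you assert it rather than prove it. The appeal to $1$-osculating regularity cannot carry the weight you put on it. Regularity guarantees that the $2n+1$ vectors ${\bf x},{\bf x}_1,\ldots,{\bf x}_n,{\bf x}_{11},\ldots,{\bf x}_{1n}$ are independent, but speciality says precisely that the remaining generator ${\bf x}_{111}$ of $\langle\gamma^{(2)}\rangle$ falls into their span — so in the only interesting case the span \emph{does} drop, and ``regularity prevents the degeneration of the span'' is not a coherent statement of what must be shown. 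What must be shown is that this infinitesimal drop at a single point forces $\langle T_{X,p},T_{X,q}\rangle$ to be small for two \emph{distinct} general points $p,q$; semicontinuity runs in exactly the opposite direction (a limit scheme may always impose fewer conditions than nearby reduced ones), so no degeneration argument can be reversed to give this. The mechanism that actually works — Terracini's for secant lines, reproduced by the paper for $k=3$ — uses speciality \emph{identically} as the point and the jet vary: working in $\PP^{2n+1}$, the identical vanishing of the determinant with columns ${\bf x},\,{\bf x}_i,\,\sum_i\lambda_i{\bf x}_{ij}\ (1\leq j\leq n),\,\sum_{i,j,k}\lambda_i\lambda_j\lambda_k{\bf x}_{ijk}$ is a differential system whose solutions are asymptotic curves $\gamma_{1,3}$ through every length-$2$ jet (the analogue of Theorem \ref{thm:gamma} and Proposition \ref{prop:equal}); one then differentiates this identity with respect to the jet parameters and extracts coefficients — exactly as in the paper's Claim \ref{cl:main} — to prove that the $2n$-dimensional space $\Pi=\langle{\bf x},{\bf x}_1,\ldots,{\bf x}_n,{\bf x}_{11},\ldots,{\bf x}_{1n}\rangle$ is \emph{constant} along such a curve $C$, hence contains $T_{X,q}$ for every $q\in C$; two distinct general points of a general such curve then violate the expected dimension, and the classical Terracini Lemma \ref{thm:lemma terra} gives $1$-defectivity. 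None of these steps — the passage from pointwise speciality to a family of curves, and the constancy of $\Pi$ along them — has a counterpart in your proposal; the sentence in which you defer ``the delicate point'' to future care is exactly where the proof is missing.
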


This result suggests the following general conjecture (see \cite {Cili} and comments therein):

\begin{conjecture} [General Infinitesimal Terracini Lemma]\label{conj:terra} Let $X\subseteq \PP^r$ be an irreducible, non--degenerate, projective variety of dimension $n$, with  $r\geqslant nm+n+m$. Suppose that $X$ is $m$--osculating regular.
Then $X$ is $m$--defective if and only if given the general $0$--dimensional curvilinear scheme $\gamma$ of length $m+1$ contained in $X$, the dimension of the linear system of hyperplane sections of $X$ singular along $\gamma$ has dimension larger than $r-(m+1)(n+1)$.\end{conjecture}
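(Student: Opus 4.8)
The plan is to derive the statement from the classical Terracini Lemma (Theorem~\ref{thm:lemma terra}) by colliding $m+1$ general points of $X$ along a general curve into the curvilinear scheme $\gamma$. First I would record the reformulation that Terracini provides: since $r\geq mn+n+m$ (for $m=2$ this is exactly $r\geq 3n+2$), one has $\min\{r,mn+n+m\}=mn+n+m$, so for general points $p_0,\dots,p_m\in X$ the variety $X$ is $m$--defective precisely when the span $\Pi:=\langle T_{X,p_0},\dots,T_{X,p_m}\rangle$ is deficient, i.e.\ $\dim\Pi<(m+1)(n+1)-1$. Dually, the hyperplanes containing $\Pi$---those tangent to $X$ at every $p_i$, cutting sections singular at $p_0,\dots,p_m$---form a system of dimension $r-\dim\Pi-1$, which exceeds the expected value $r-(m+1)(n+1)$ exactly in the defective case.

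Next I would fix a general point $p\in X$ and a general smooth curve $C\subseteq X$ through $p$, parametrize it by $\phi(t)$ with $\phi(0)=p$, and let $m+1$ parameters $t_0,\dots,t_m\to 0$, so that $\{\phi(t_i)\}$ specializes to the length-$(m+1)$ curvilinear scheme $\gamma$ cut on $C$. Fixing a moving frame $\hat e_0(t)=\phi(t),\hat e_1(t),\dots,\hat e_n(t)$ of the affine tangent cone $\hat T_{X,\phi(t)}$, the flat limit of the spans $\langle\hat T_{X,\phi(t_0)},\dots,\hat T_{X,\phi(t_m)}\rangle$ is the jet span
\[
\hat\Lambda_\gamma=\big\langle\,\partial_t^k\hat e_j(0)\ :\ 0\leq k\leq m,\ 0\leq j\leq n\,\big\rangle .
\]
I would then check that a hyperplane is singular along $\gamma$ if and only if it contains $\Lambda_\gamma:=\PP(\hat\Lambda_\gamma)$, so that the system of sections singular along $\gamma$ has dimension exactly $r-\dim\Lambda_\gamma-1$. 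Semicontinuity of the rank of the jet span yields for free the inequality $\dim\Lambda_\gamma\leq\dim\Pi$, hence the easy implication of Conjecture~\ref{conj:terra}: $m$--defectivity of $X$ forces an excess family of sections singular along $\gamma$.

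The crux is the reverse estimate $\dim\Lambda_\gamma\geq\dim\Pi$ under the hypothesis that $X$ is $m$--osculating regular; granting it, the two dimensions coincide, the excess hypothesis gives $\dim\Pi=\dim\Lambda_\gamma<(m+1)(n+1)-1$, and $X$ is $m$--defective by Terracini. I expect this no--drop statement to be the main obstacle. That osculating regularity is the relevant hypothesis is visible in the jet span itself: the vectors with $j=0$, namely $\phi(0),\phi'(0),\dots,\phi^{(m)}(0)$, span the osculating $m$--space $O_m(C,p)$, whose dimension is governed precisely by $\dim{\rm Osc}_m(X)=(m+1)n$, while the vectors with $j\geq 1$ record how the $n$ tangent directions of $X$ osculate along $C$. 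I would show that $m$--osculating regularity forces all $(m+1)(n+1)$ jets to be as independent as the generic configuration of the $p_i$ demands, so that no rank is lost in the limit. For $m=2$ this reduces to a finite computation: writing the $3(n+1)$ jets $\partial_t^k\hat e_j(0)$ ($0\leq k\leq 2$, $0\leq j\leq n$) in terms of $\phi,\phi',\phi''$ and of the first and second fundamental forms of $X$ along $C$, one verifies that the single non--degeneracy encoded by $\dim{\rm Osc}_2(X)=3n$ propagates to the full $3(n+1)\times 3(n+1)$ jet matrix whenever $\Pi$ has maximal rank. The delicate point---and where curvilinearity of $\gamma$ and the bound $r\geq 3n+2$ enter---is to disentangle the curve's own osculation from the ambient second--order geometry and to rule out the accidental rank drops that osculating irregularity would otherwise permit.
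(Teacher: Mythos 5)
Your reduction to the classical Terracini Lemma founders on the identification of the linear system in the statement with the hyperplanes containing your jet span $\Lambda_\gamma$, and this is not a repairable technicality: it is exactly the point of the problem. A hyperplane section is singular along $\gamma$ when it is singular at $p$ \emph{and} its successive strict transforms are singular at the infinitely near points $p_1,\dots,p_m$, and these latter conditions involve derivatives of order up to $2m+1$ in the direction of the jet. For $m=2$ this is Proposition~\ref{prop:tan}: the space $T_{X,\gamma}$ cut out by the hyperplanes singular along $\gamma$ is spanned by the $3n+1$ vectors coming from jets of order $\leq 3$ \emph{together with} two further generators involving ${\bf x}_{1111}$ and ${\bf x}_{11111}$. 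Consequently the system of sections singular along $\gamma$ has dimension $r-\dim T_{X,\gamma}-1$, not $r-\dim\Lambda_\gamma-1$. Your flat-limit claim fails for the same reason: a limit of linear spaces in the Grassmannian keeps its dimension ($3n+2$ in the non-defective case), whereas $\Lambda_\gamma$ can never reach that dimension, because the $(m+1)(n+1)$ jets $\partial_t^k\hat e_j(0)$ satisfy forced relations --- from $\phi'(t)\in\hat T_{X,\phi(t)}$ one gets $\phi'=\sum_j a_j\hat e_j$, hence $\phi''\in\langle \hat e_j,\hat e_j'\rangle$, and so on --- so that $\Lambda_\gamma$ is spanned by at most $3n+1$ vectors. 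In fact $\Lambda_\gamma$ is exactly the span of the vectors \eqref{eq:vec}, i.e.\ the tangent space to ${\rm Osc}_2(X)$, so under your standing hypothesis of $2$--osculating regularity one has $\dim\Lambda_\gamma=3n$ precisely (Proposition~\ref{prop:reg}).

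This makes your announced crux not merely difficult but false: the inequality $\dim\Lambda_\gamma\geq\dim\Pi$ would force $\dim\Pi\leq 3n$, i.e.\ it would ``prove'' that every $2$--osculating regular variety is $2$--defective, which is absurd; in particular the $(m+1)(n+1)$ jets can never be ``as independent as the generic configuration of the $p_i$ demands''. The rank loss in the limit is unavoidable, and the real question is whether the two extra conditions carried by ${\bf x}_{1111}$ and ${\bf x}_{11111}$ are independent of the rest; no pointwise computation at a single $\gamma$ can decide this, since semicontinuity only yields $t_{X,\gamma}\leq\dim\Pi$, which is the direction of the easy implication (and is also how your easy direction should be argued, with $T_{X,\gamma}$ in place of $\Lambda_\gamma$). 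The paper's proof of the hard direction for $m=2$ (Theorem~\ref{thm:main}) is structurally different and is driven exactly by those higher-order terms: speciality along all length-3 curvilinear schemes is equivalent, via Theorem~\ref{thm:gamma} and Proposition~\ref{prop:equal}, to the existence of a $3(n-1)$--dimensional family of quasi-asymptotic curves $\gamma_{1,5}$, one through each such scheme; the key step (Claim~\ref{cl:main}) extracts, from the identical vanishing of a determinant in the $\lambda$'s and $\mu$'s, the fact that the $(3n+1)$--dimensional span attached to a general $\gamma_{1,5}$ is \emph{constant} along the curve, hence contains $T_{X,q}$ for every point $q$ of the curve; the classical Terracini Lemma (Theorem~\ref{thm:lemma terra}) applied to three general points of one such curve then gives $2$--defectivity. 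Any successful argument must produce from speciality such a positive-dimensional locus along which the tangent spaces stay confined; a limit/semicontinuity argument at one scheme cannot do this.
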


The present paper is devoted to the proof of this conjecture in the case $m=2$. Our main result, i.e., Theorem \ref {thm:main}, is contained in Section \ref {sec:terra}. Sections \ref {sec:1}, \ref {sec:qacurves}, \ref {sec:osc} are devoted to preliminaries and preparatory results. 

Conjecture \ref {conj:terra} for all $m$ looks quite plausible since the obstruction to proving it in its general form is not theoretical but purely technical, i.e., the necessary computations become extremely complicated and difficult to handle.
 
\medskip

\noindent {\bf Aknowledgements:} The author is a member of GNSAGA of INdAM. He acknowledges the MIUR Excellence Department Project awarded to the Department of Mathematics, University of Rome Tor Vergata, CUP E83C18000100006.

\section{The tangent space to a variety along a curvilinear scheme}\label {sec:1} 

Let $X\subseteq \PP^r$ be an irreducible, projective, non--degenerate, variety of dimension $n$. Let $\gamma$ be a $0$--dimensional, length $k$, curvilinear scheme supported at a smooth point $p\in X$. Then $\gamma$ determines a sequence $p_1, p_2,\ldots, p_{k-1}$ of \emph{infinitely near points} to $p$ on $X$, where $p_1$ is the infinitely near point to $p$ along $\gamma$, i.e., $p_1$ lies on the blow--up $X_1$ of $X$ at $p$ and it is the support of the strict transform $\gamma_1$ of $\gamma$ on $X_1$ ($\gamma_1$ is a $0$--dimensional, length $k-1$, curvilinear scheme), and inductively $p_i$ lies on the blow--up $X_i$ of $X_{i-1}$ at $p_{i-1}$ and is the support of the strict transform $\gamma_i$ of $\gamma_{i-1}$ on $X_i$ ($\gamma_i$ is a $0$--dimensional, length $k-i$, curvilinear scheme), for $2\leq i\leq k-1$. 

Given a hyperplane $\pi$ of $\PP^r$, we denote by $X_\pi$ the intersection scheme of $X$ with $\pi$.  In the above set up, we define the \emph{tangent space} $T_{X,\gamma}$ to $X$ along $\gamma$ to be the intersection of all the hyperplanes $\pi$ of $\PP^r$, such that $X_\pi$ is singular along $\gamma$, i.e.,
$\pi$ is tangent to $X$ at $p$ and moreover the strict transform of $X_\pi$ to the blow--up $X_i$ is singular at the point $p_i$, for $1\leq i\leq k-1$. If $k=1$, then $\gamma$ equals its support $p$ and $T_{X,\gamma}=T_{X,p}$. In general the expected dimension of  $T_{X,\gamma}$ is
\[
\tau_{n,k}=\min\{r, k(n+1)-1\}
\]
and 
\begin{equation}\label{eq:tan}
t_{X,\gamma}:=\dim(T_{X,\gamma})\leq \tau_{n,k}
\end{equation}
We will say that $X$ is \emph{regular} along $\gamma$ if the equality holds in \eqref {eq:tan}, and \emph{special} otherwise. There is never speciality if $k=1$, but speciality may occur for $k>1$. As mentioned in the Introduction, the case $k=2$ has been considered by Terracini in \cite{terra42} (see \cite {Cili} for comments). In the present paper we focus on the case $k=3$ and we want to characterize varieties $X$ special along a general $0$--dimensional curvilinear length 3 scheme. We will  assume $r\geq 3n+2$, so that $\tau_{n,3}=3n+2$. 

We want to give an explicit description of $T_{X,\gamma}$ with $\gamma$ a $0$--dimensional curvilinear length 3 scheme. To do so, we let $p\in X$ be a smooth point, and consider a \emph{(projective) chart} of $X$ around $p$, i.e., a local biholomorphic parametrization ${\bf x}(u_1,\ldots,u_n)$ of the smooth locus of $X$ around $p$, where ${\bf x}(u_1,\ldots,u_n)$ is a vector of homogeneous coordinates of a point on $X$ and $(u_1,\ldots,u_n)$ varies in an $n$--dimensional polidisk $\mathbb D^n$. The point $p$ corresponds to $(u_1,\ldots, u_n)={\bf 0}$ and the Jacobian of ${\bf x}(u_1,\ldots,u_n)$ with respect to $u_1,\ldots, u_n$ has  rank $n$. By expanding in Taylor series, we have
\[
{\bf x}(u_1,\ldots,u_n)=\sum_{h=0}^\infty \frac 1 {h!} {\bf x}^{(h)}(u_1,\ldots, u_n),
\]
where
\[
{\bf x}^{(h)}(u_1,\ldots, u_n)=\sum_{i_1,\ldots,i_h} {\bf x}_{i_1,\ldots,i_h}({\bf 0})u_{i_1}\cdots u_{i_h}
\]
where $(i_1,\ldots,i_h)$ varies among all choices of $h$  
indices with repetitions among $1,\ldots, n$, and 
\[
{\bf x}_{i_1,\ldots,i_h}=\frac {\partial^h {\bf x}}{\partial u_{i_1}\ldots \partial u_{i_h}}.
\]
To ease notation, if no confusion arises, we will denote ${\bf x}_{i_1,\ldots,i_h}({\bf 0})$ simply by 
 ${\bf x}_{i_1,\ldots,i_h}$. Let ${\bf  X}=(x_0:\ldots: x_r)$ be a vector of homogeneous coordinates on $\PP^r$, and consider a hyperplane $\pi$ with equation ${\bf a} \cdot {\bf  X}=0$, where ${\bf a}=(a_0,\ldots, a_r)$ is a non--zero vector. Then the equation of $X_\pi$  in the above chart is
\[
{\bf a} \cdot {\bf x}(u_1,\ldots,u_n)=0, \quad \text{i.e.,}\quad \sum_{h=0}^\infty \frac 1 {h!} {\bf a} \cdot  {\bf x}^{(h)}(u_1,\ldots, u_n)=0.
\]
Then $\pi$ is \emph {tangent} to $X$ at $p$ if and only if 
\[
{\bf a} \cdot {\bf x}={\bf a} \cdot {\bf x}_1=\ldots ={\bf a} \cdot {\bf x}_n=0,
\] 
or, in other terms, $T_{X,p}$ is generated by the points with homogenous coordinates ${\bf x}, {\bf x}_1,\ldots, {\bf x}_n$, that are linearly independent. By abusing notation we will write
\[
T_{X,p}=\langle {\bf x}, {\bf x}_1,\ldots, {\bf x}_n\rangle.
\]

Let us now consider $\gamma$ a $0$--dimensional curvilinear length 3 scheme supported at $p$. This is determined by a \emph{second order jet} parametrically given by equations of the form
\begin{equation}\label{eq:jet}
u_i=\lambda _it+ \mu_it^2, \quad \text{with} \quad i=1,\ldots, n,
\end{equation} 
where $t\in \CC[t]/(t^3)$, and one has the vectors of constants ${\bf \lambda}=(\lambda_1,\ldots, \lambda_n)\neq {\bf 0}$ and ${\bf \mu}=(\mu_1,\ldots, \mu_n)$. Up to a change of homogeneous coordinates in $\PP^r$ we may assume that ${\bf \lambda}=(1,0,\ldots, 0)$ and, with a change of the parameter $t$, we may assume, in addition,  that  $\mu_1=0$. 

We want to impose that $X_\pi$ is singular along $\Gamma$. For this we first impose that the proper transform of $X_\pi$ has a singular point at the infinitely near point $p_1$. So we blow--up $X$ at $p$, or, by working in the chart, we blow--up the polidisk $\mathbb D$ at ${\bf 0}$. The blow--up sits in $ \mathbb D\times \PP^{n-1}$ and has equations
\[
\rank \left(\begin{matrix} 
u_1& \ldots &u_{n} \\
v_1&\ldots &v_n \\
\end{matrix}\right)<2,
\]
where $(v_1: \ldots: v_n)$ are homogeneous coordinates in $\PP^{n-1}$. We put ourselves in the open subset where $v_1\neq 0$, so that we can set $v_1=1$, and the blow--up has then equations
\begin{equation}\label{eq:bu}
u_i=u_1v_i, \quad \text {for}\quad 2\leq i\leq n,
\end{equation} 
in $\mathbb D\times \mathbb A^{n-1}$. Hence in this open subset the blow--up is isomorphic to $\mathbb A^n$ with coordinates $(u_1,v_2,\ldots, v_n)$. We set $u_1:=u$. Then the  exceptional divisor has equation $u=0$ in this chart. The proper transform of the jet \eqref {eq:jet} on the blow--up has equations
\[
u=t, v_i=\mu_it, \quad \text {for}\quad 2\leq i\leq n.
\]
Setting $t=0$ we see that the infinitely near point $p_1$ in this chart is again the origin $\bf 0$. The proper transform of $X_\pi$ on the blow--up has equation
\[
\sum_{h=2}^\infty \frac 1 {h!} u^{h-2} {\bf a} \cdot  {\bf x}^{(h)}(1,v_2\ldots, v_n)=0,
\]
and we must impose that this has a singular point at the origin. Passing through the origin means that 
\begin{equation}\label{eq:a11}
{\bf a} \cdot  {\bf x}^{(2)}(1,0\ldots, 0)=0, \quad 
\text {i.e.,}\quad {\bf a} \cdot {\bf x}_{11}=0.
\end{equation}
Then we must impose the vanishing at the origin of the derivatives with respect to $u, v_2,\ldots, v_n$. By imposing that the derivative with respect to $u$ vanishes one gets
\begin{equation}\label{eq:a111}
{\bf a} \cdot  {\bf x}^{(3)}(1,0\ldots, 0)=0, \quad 
\text {i.e.,}\quad {\bf a} \cdot {\bf x}_{111}=0.
\end{equation}
By imposing that the derivative with respect to $v_i$ vanishes, for $2\leq i\leq n$, one gets
\begin{equation}\label{eq:aij}
{\bf a} \cdot  \frac \partial {\partial v_i} {\bf x}^{(2)}(1,0\ldots, 0)=0, \quad 
\text {i.e.,}\quad {\bf a} \cdot {\bf x}_{1i}=0.
\end{equation}
This means that if we denote by $\gamma'$ the $0$--dimensional scheme of length 2 which is the truncation of $\gamma$ at second order, then
\[
T_{X,\gamma'}=\langle {\bf x}, {\bf x}_1,\ldots, {\bf x}_n, {\bf x}_{11}, \ldots, {\bf x}_{1n}, {\bf x}_{111}\rangle,
\]
though now the points could be no longer linearly independent. 
 
Next we want to impose that the proper transform of $X_\pi$ has a singular point at the infinitely near point $p_2$. For this we blow--up again at $p_1$, namely we have to blow up $\mathbb A^n$ with coordinates $(u,v_1,\ldots, v_n)$ at the origin. The resulting blow--up lives in $\mathbb A^n\times \PP^{n-1}$ and has equations
\[
\rank \left(\begin{matrix} 
u& v_1&\ldots &v_{n} \\
w_1&w_2&\ldots &v_n \\
\end{matrix}\right)<2,
\]
where $(w_1: \ldots: w_n)$ are homogeneous coordinates in $\PP^{n-1}$. We put ourselves in the open subset where $w_1\neq 0$, so that we can set $w_1=1$, and the blow--up has then equations
\begin{equation}\label{eq:bu}
v_i=uw_i, \quad \text {for}\quad 2\leq i\leq n,
\end{equation} 
in $\AA^n\times \mathbb A^{n-1}$.  In this open subset the blow--up is isomorphic to $\mathbb A^n$ with coordinates $(u,w_2,\ldots, w_n)$. The exceptional divisor has again equation $u=0$. 

To see what are the coordinates of the infinitely near point $p_2$ in this chart, we take the strict transform of $\gamma$ on this blow--up. It has equations
\[
u=t, w_i=\mu_i, \quad  \text {for}\quad 2\leq i\leq n,
\]
hence, setting $t=0$ we see that $p_2$ has coordinates $(0,\mu_2,\ldots, \mu_n)$. 

Next we take the strict transform of $X_\pi$. The total transform has equation
\begin{equation}\label{eq:buq}
\sum_{h=2}^\infty \frac 1 {h!} u^{h-2} {\bf a} \cdot  {\bf x}^{(h)}(1,uw_2\ldots, uw_n)=0.
\end{equation} 
Now, notice that
\[
{\bf a} \cdot  {\bf x}^{(2)}(u_1,u_2\ldots, u_n)=\sum_{ij}a_{ij}u_iu_j, \quad  \text {were we set}\quad a_{ij}:= {\bf a}\cdot {\bf x}_{ij},
\]
and we already imposed $a_{1j}=0$ for $1\leq j\leq n$ (see \eqref {eq:a11} and \eqref {eq:aij}). Hence 
\[
{\bf a} \cdot  {\bf x}^{(2)}(1,uw_2\ldots, uw_n)=u^2\sum_{2\leq i,j\leq n}a_{ij}w_iw_j.
\]
Moreover 
\[
{\bf a} \cdot  {\bf x}^{(3)}(u_1,u_2\ldots, u_n)=\sum_{ijk}a_{ij}u_iu_ju_k, \quad  \text {were we set}\quad a_{ijk}:= {\bf a}\cdot {\bf x}_{ijk},
\]
and we have  $a_{111}=0$ by \eqref {eq:a111}. Thus ${\bf a} \cdot  {\bf x}^{(3)}(1,v_2\ldots, v_n)$ is a degree 3 polynomial in $v_2,\ldots, v_n$ with no constant term. So we can write
\[
{\bf a} \cdot  {\bf x}^{(3)}(1,v_2\ldots, v_n)=\sum_{i=1}^3\psi_i(v_2\ldots, v_n),
\]
where $\psi_i(v_2\ldots, v_n)$ is a homogeneous polynomial of degree $i$, for $1\leq i\leq 3$. Therefore
\[
{\bf a} \cdot  {\bf x}^{(3)}(1,uw_2\ldots, uw_n)=u\sum_{i=1}^3u^{i-1}\psi_i(w_2\ldots, w_n).
\]
Plugging into \eqref {eq:buq}, we see that the left hand side of \eqref {eq:buq} is divisible by $u^2$ (which was a priori clear because $p_1$ is a singular point of the proper transform of $X_\pi$). Dividing by $u^2$ we get the equation of the strict transform of $X_\pi$, namely
\[
\frac 1 {2!} \sum_{2\leq i,j\leq n}a_{ij}w_iw_j+ \frac 1{3!} \sum_{i=1}^3u^{i-1}\psi_i(w_2\ldots, w_n)+ \sum_{h=4}^\infty \frac 1 {h!} u^{h-4} {\bf a} \cdot  {\bf x}^{(h)}(1,uw_2\ldots, uw_n)=0
\]

Now we have to impose that this divisor has a singular point at $(0, \mu_2,\ldots, \mu_n)$. First of all, imposing vanishing at this point, reads
\begin{equation}\label{eq:lop}
12 \sum_{2\leq i,j\leq n}a_{ij}\mu_i\mu_j+ 4 \psi_1(\mu_2\ldots, \mu_n)+  {\bf a} \cdot  {\bf x}_{1111}=0.
\end{equation}
On the other hand one has
\[
\psi_1(\mu_2\ldots, \mu_n)=3\sum_{i=2}^na_{11i}\mu_i
\]
so that \eqref {eq:lop} reads
\[
 {\bf a} \cdot \big (12 \sum_{2\leq i,j\leq n}{\bf x}_{ij}\mu_i\mu_j+ 12 \sum_{i=2}^n{\bf x}_{11i}\mu_i +  {\bf x}_{1111}\big )=0.
\]
Next we have to impose the vanishing of the derivatives. By imposing the vanishing of the $u$--derivative and arguing as we did above, we find
\[
{\bf a} \cdot \big (60 \sum_{2\leq i,j\leq n}{\bf x}_{1ij}\mu_i\mu_j+ 20 \sum_{i=2}^n{\bf x}_{111i}\mu_i +  {\bf x}_{11111}\big )=0.
\]
Finally, imposing the vanishing of the $w_h$--derivative, with $2\leq h\leq n$, we find
\[
{\bf a} \cdot \big (2\sum_{2\leq i\leq n}{\bf x}_{ih}\mu_i+ {\bf x}_{11h})=0, \quad \text{for}\quad 2\leq h\leq n.
\]

In conclusion we can state the following:

\begin{proposition}\label{prop:tan} Let $X\subseteq \PP^r$ be an irreducible, projective, non--degenerate variety of dimension  $n$, with $r\geq 3n+2$. Let $p\in X$ be a smooth point and let ${\bf x}(u_1,\ldots,u_n)$ be a chart of $X$ around $p$. Let $\gamma$ be the curvilinear $0$--dimensional, length 3 scheme determined by the jet with equations \eqref {eq:jet}, where  $(\lambda_1,\ldots, \lambda_n)=(1,0, \ldots, 0)$ and $\mu_1=0$. Then
\begin{equation}\label{eq:part}
\begin{split}
&T_{X,\gamma}=\langle {\bf x}, {\bf x}_1,\ldots, {\bf x}_n, {\bf x}_{11}, \ldots, {\bf x}_{1n}, {\bf x}_{111},2 \sum_{2\leq i\leq n}{\bf x}_{i2}\mu_i+ {\bf x}_{112},\ldots, 2\sum_{2\leq i\leq n}{\bf x}_{in}\mu_i+ {\bf x}_{11n},\\
&12 \sum_{2\leq i,j\leq n}{\bf x}_{ij}\mu_i\mu_j+ 12 \sum_{i=2}^n{\bf x}_{11i}\mu_i +  {\bf x}_{1111}, 60 \sum_{2\leq i,j\leq n}{\bf x}_{1ij}\mu_i\mu_j+ 20 \sum_{i=2}^n{\bf x}_{111i}\mu_i +  {\bf x}_{11111}\rangle
\end{split}
\end{equation}

\end{proposition}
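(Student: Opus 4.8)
The plan is to reduce the geometric condition ``$X_\pi$ is singular along $\gamma$'' to an explicit system of \emph{linear} equations in the coefficient vector ${\bf a}$ of the hyperplane $\pi$, and then to dualize. Indeed, asking a given point ${\bf v}$ of $\PP^r$ to lie in $T_{X,\gamma}$ amounts to a single linear equation ${\bf a}\cdot {\bf v}=0$ on the admissible ${\bf a}$. Hence, if the locus of those ${\bf a}$ for which $X_\pi$ is singular along $\gamma$ is cut out exactly by the equations ${\bf a}\cdot {\bf v}=0$ as ${\bf v}$ ranges over a finite set $S\subseteq \CC^{r+1}$, then this locus is the annihilator $\langle S\rangle^\perp$, and its intersection of hyperplanes $\{{\bf X}\colon {\bf a}\cdot {\bf X}=0\}$ — which by definition is $T_{X,\gamma}$ — equals $(\langle S\rangle^\perp)^\perp=\langle S\rangle$. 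So the entire proof consists in correctly identifying the generating set $S$.

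First I would collect the conditions already derived. Tangency at $p$ gives ${\bf a}\cdot {\bf x}={\bf a}\cdot {\bf x}_1=\ldots={\bf a}\cdot {\bf x}_n=0$, while imposing that the proper transform of $X_\pi$ be singular at the first infinitely near point $p_1$ produces, via \eqref{eq:a11}, \eqref{eq:aij} and \eqref{eq:a111}, the further equations ${\bf a}\cdot {\bf x}_{11}=0$, ${\bf a}\cdot {\bf x}_{1i}=0$ for $2\leq i\leq n$, and ${\bf a}\cdot {\bf x}_{111}=0$. This already accounts for the length-$2$ truncation $\gamma'$ and contributes the generators ${\bf x},{\bf x}_1,\ldots,{\bf x}_n,{\bf x}_{11},\ldots,{\bf x}_{1n},{\bf x}_{111}$ to $S$.

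Next I would perform the second blow-up, locate $p_2$ at $(0,\mu_2,\ldots,\mu_n)$, and impose that the strict transform of $X_\pi$ be singular there, i.e.\ that the defining function and all its first partial derivatives — with respect to $u$ and to $w_2,\ldots,w_n$ — vanish at $p_2$. The vanishing of the value, after replacing $\psi_1$ by $3\sum_{i=2}^n a_{11i}\mu_i$ in \eqref{eq:lop}, contributes the generator $12\sum_{2\leq i,j\leq n}{\bf x}_{ij}\mu_i\mu_j+12\sum_{i=2}^n{\bf x}_{11i}\mu_i+{\bf x}_{1111}$; the $u$-derivative contributes $60\sum_{2\leq i,j\leq n}{\bf x}_{1ij}\mu_i\mu_j+20\sum_{i=2}^n{\bf x}_{111i}\mu_i+{\bf x}_{11111}$; and the $w_h$-derivatives, for $2\leq h\leq n$, contribute $2\sum_{2\leq i\leq n}{\bf x}_{ih}\mu_i+{\bf x}_{11h}$. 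Adjoining these to $S$ and invoking the duality of the first paragraph yields exactly \eqref{eq:part}.

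The main obstacle is the bookkeeping of the second blow-up. One must track the power of $u$ dividing the total transform \eqref{eq:buq}, divide it out to obtain the honest equation of the strict transform, and then differentiate that strict transform in $u$ and in each $w_h$ and evaluate at the \emph{off-origin} point $p_2=(0,\mu_2,\ldots,\mu_n)$ rather than at the origin, where the chain rule forces genuine contributions from the quadratic and cubic parts. Getting these chain-rule terms and the numerical factors ($12$, $60$, $20$, arising from the Taylor denominators $h!$ together with the degree counts in $\psi_1$) right is the only delicate point; once $S$ is assembled, the passage to $T_{X,\gamma}=\langle S\rangle$ is purely formal.
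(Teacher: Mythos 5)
Your proposal is correct and follows essentially the same route as the paper: the paper's own proof is precisely the two successive blow-ups carried out in Section \ref{sec:1}, converting singularity of $X_\pi$ along $\gamma$ into the linear conditions ${\bf a}\cdot{\bf v}=0$ for the listed vectors ${\bf v}$, and then reading off $T_{X,\gamma}$ as the span of those vectors. Your explicit dualization $(\langle S\rangle^\perp)^\perp=\langle S\rangle$ is left implicit in the paper, but the generating set $S$ you assemble, including the coefficients $2$, $12$, $60$, $20$ from the chain rule at the off-origin point $p_2=(0,\mu_2,\ldots,\mu_n)$, matches the paper's computation exactly.
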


\section{Quasi--asymptotic curves}\label{sec:qacurves}

Let $X\subseteq \PP^r$ be an irreducible, projective, non--degenerate variety of dimension $n$ and let $p$ be a smooth point of $X$. As we did before, consider ${\bf x}(u_1,\ldots,u_n)$  a chart of $X$ around $p$. 

Given a positive integer $h$ we denote by $T^ {(h)}_{X,p}$ the \emph{$h$--osculating space} to $X$ at $p$, which is the subspace of $\PP^r$ which is spanned by the points which have the vectors of the homogeneous coordinates given by the derivatives of order at most $h$ of ${\bf x}$ at $\bf 0$, provided such vectors are non--zero. 

With the same notation as above, $T^ {(h)}_{X,p}$ is spanned by the points which have (non--zero) coordinate vectors  ${\bf x}_{j_0,\ldots, j_r}$, with $ j_0+\ldots+ j_r\leq h$. 
In particular the tangent space  $T_{X,p}$ is  $T^ {(1)}_{X,p}$. 

The osculating spaces can be defined intrinsecally, i.e., without making use of a local parametrizations of $X$. Let $\mathcal P^h_X(1)$ the \emph{sheaf of principal parts of order $h$} of $\mathcal O_X(1)$, which is a locally free sheaf of rank ${h+n}\choose n$ on the smooth locus of $X$ (cfr. \cite [Example 2.5.6]{Fulton}). There is a natural evaluation morphism
$$\rho_X^h: \mathcal O_X^{r+1}\to \mathcal P^h_X(1).$$
Then $T^ {(h)}_{X,p}$  is defined as $\mathbb P({\rm Im}(\rho_X^h(p)))$. 

Next we introduce the definition of \emph{quasi--asymtotic curves} on a variety. This concept has been introduced by E. Bompiani in \cite {Bomp13}.

Let $X\subseteq \PP^r$ be a variety as above. Consider a smooth curve $C\subseteq X$ contained in the smooth locus of $X$. One says that $C$ is a \emph{$(h,k)$--asymptotic curve} or simply a  {$\gamma_{h,k}$} of $X$, with $h<k$, if for $p\in C$ general one has
$${\dim \big (  \langle T^ {(h)}_{X,p}, T^ {(k)}_{C,p}\rangle \big)< \min \big \{r, \dim \big ( T^ {(h)}_{X,p}\big) +k-h\big\}}.$$

For example a $\gamma_{1,2}$ of $X$, is an asymptotic curve in the sense of elementary differential geometry, i.e., a curve such that for every point $p\in C$ (smooth for $C$ and $X$), the osculating plane $T^ {(2)}_{C,p}$ to $C$ at $p$ is contained in the tangent space $T_{X,p}$ to $X$ at $p$.

More generally a $\gamma_{1,k}$ of $X$ is such that for any point $p$ smooth for both $C$ and $X$, one has
$${\dim \big ( T^ {(k)}_{C,p} \cap T_{X,p}\big)\geqslant 2}.$$

It is important for our purposes to determine necessary and sufficient conditions for the existence of $\gamma_{1,5}$'s on $X$. We will assume from now on $r=3n+2$ (if $r>3n+2$ we can consider a general projection of $X$ in $\PP^{3n+2}$) and, more precisely, we want to determine necessary and sufficient conditions for the existence of a family of dimension $3(n-1)$ of $\gamma_{1,5}$'s on $X$ with the property that given a general curvilinear, 0--dimensional scheme $\gamma$ of lenght 3 on $X$, there is a unique $\gamma_{1,5}$ on $X$ containing  $\gamma$. 

We suppose, as before, given a chart of $X$ around $p\in X$ and that an analytic curve $C$ through $p$ is given via a parametric representation of the form
\begin{equation}\label{eq:funct}
u_i=u_i(t)=\lambda_i t+ \mu_i t^ 2+\nu_it^3+\rho_it^4 +\sigma_it^5+ \ldots \,\,\, ,\,\,\, \text {with} \,\,\, i=1,\ldots, n,
\end{equation}
with $t$ a parameter varying in a disc $\mathbb D$ with center the origin of $\mathbb C$, and  $(\lambda_1,\ldots, \lambda_n)\neq {\bf 0}$. We let 
\[
{\bf x}(t)={\bf x}(u_1(t),\ldots, u_n(t))
\]
be the parametrization of $C$ around $p$. We denote by ${\bf x}', {\bf x}''$ etc., the derivatives of ${\bf x}(t)$ at $t=0$. Thus we have
\[
\begin{split}
&{\bf x}'=\sum_{i=1}^n {\bf x}_i\lambda_i\\
&{\bf x}''=\sum_{i,j=1}^n {\bf x}_{ij}\lambda_i\lambda_j+2\sum_{i=1}^n{\bf x}_i\mu_i\\
&{\bf x}'''=\sum_{i,j,k=1}^n {\bf x}_{ijk}\lambda_i\lambda_j\lambda_k+6\sum_{i,j=1}^n {\bf x}_{ij}\lambda_i\mu_j+6\sum_{i=1}^n {\bf x}_i\nu_i\\
&{\bf x}''''=\sum_{i,j,k,l=1}^n {\bf x}_{ijkl}\lambda_i\lambda_j\lambda_k\lambda_l+12\sum_{i,j,k=1}^n {\bf x}_{ijk}\lambda_i\lambda_j\mu_k+12 \sum_{i,j=1}^n {\bf x}_{ij}\mu_i\mu_j+24\sum_{i,j=1}^n {\bf x}_{ij}\lambda_i\nu_j+24\sum_{i=1}^n {\bf x}_i\rho_i\\
&{\bf x}'''''=\sum_{i,j,k,l,m=1}^n {\bf x}_{ijkl}\lambda_i\lambda_j\lambda_k\lambda_l\lambda_m+20\sum_{i,j,k,l=1}^n {\bf x}_{ijkl}\lambda_i\lambda_j\lambda_k\mu_l+60\sum_{i,j,k=1}^n {\bf x}_{ijk}\lambda_i\mu_j\mu_k+\\
&+120\sum_{i,j=1}^n {\bf x}_{ij}\mu_i\nu_j+120\sum_{i,j=1}^n {\bf x}_{ij}\lambda_i\rho_j+60\sum_{i,j,k=1}^n {\bf x}_{ijk}\lambda_i\lambda_j\nu_k+120\sum_{i=1}^n {\bf x}_i\sigma_i
\end{split}
\]
and $C$ verifies the $\gamma_{1,5}$ condition at $p$ if and only if the vectors
 \[
{\bf x}, {\bf x}_1, \ldots, {\bf x}_n, {\bf x}',{\bf x}'', {\bf x}''', {\bf x}'''', {\bf x}'''''
\] 
are linearly dependent. This is equivalent to say that the vectors ${\bf x}, {\bf x}_1, \ldots, {\bf x}_n$ and
\[
\begin{split}
&\sum_{i,j=1}^n {\bf x}_{ij}\lambda_i\lambda_j\\
&\sum_{i,j,k=1}^n {\bf x}_{ijk}\lambda_i\lambda_j\lambda_k+6\sum_{i,j=1}^n {\bf x}_{ij}\lambda_i\mu_j\\
&\sum_{i,j,k,l=1}^n {\bf x}_{ijkl}\lambda_i\lambda_j\lambda_k\lambda_l+12\sum_{i,j,k=1}^n {\bf x}_{ijk}\lambda_i\lambda_j\mu_k+12 \sum_{i,j=1}^n {\bf x}_{ij}\mu_i\mu_j+24\sum_{i,j=1}^n {\bf x}_{ij}\lambda_i\nu_j\\
&\sum_{i,j,k,l,m=1}^n {\bf x}_{ijkl}\lambda_i\lambda_j\lambda_k\lambda_l\lambda_m+20\sum_{i,j,k,l=1}^n {\bf x}_{ijkl}\lambda_i\lambda_j\lambda_k\mu_l+60\sum_{i,j,k=1}^n {\bf x}_{ijk}\lambda_i\mu_j\mu_k+\\
&+120\sum_{i,j=1}^n {\bf x}_{ij}\mu_i\nu_j+120\sum_{i,j=1}^n {\bf x}_{ij}\lambda_i\rho_j+60\sum_{i,j,k=1}^n {\bf x}_{ijk}\lambda_i\lambda_j\nu_k
\end{split}
\]
are linearly dependent. Note  that we want this condition to hold given $(\lambda_1,\ldots, \lambda_n)\neq {\bf 0}$ and $(\mu_1,\ldots, \mu_n)$ arbitrarily, and for suitable $(\nu_1,\ldots, \nu_n)$, $(\rho_1,\ldots, \rho_n)$. This implies that the given condition is equivalent to the linear dependence of the $3n+3$ vectors ${\bf x}, {\bf x}_1, \ldots, {\bf x}_n$ and
\begin{equation}\label{eq:vectors}
\begin{split}
&\sum_{i=1}^n {\bf x}_{ij}\lambda_i, \quad \text{for}\quad  1\leq j\leq n\\
%&\sum_{i,j,k=1}^n {\bf x}_{ijk}\lambda_i\lambda_j\lambda_k\\
&\sum_{i,j,k,l=1}^n {\bf x}_{ijkl}\lambda_i\lambda_j\lambda_k\lambda_l+12\sum_{i,j,k=1}^n {\bf x}_{ijk}\lambda_i\lambda_j\mu_k+12 \sum_{i,j=1}^n {\bf x}_{ij}\mu_i\mu_j\\
&2\sum_{i=1}^n {\bf x}_{ik}\mu_i+\sum_{i,j=1}^n {\bf x}_{ijk}\lambda_i\lambda_j, \quad \text{for}\quad  1\leq k\leq n\\
&\sum_{i,j,k,l,m=1}^n {\bf x}_{ijklm}\lambda_i\lambda_j\lambda_k\lambda_l\lambda_m+20\sum_{i,j,k,l=1}^n {\bf x}_{ijkl}\lambda_i\lambda_j\lambda_k\mu_l+60\sum_{i,j,k=1}^n {\bf x}_{ijk}\lambda_i\mu_j\mu_k.
\end{split}
\end{equation}
Note that we suppressed the vector $\sum_{i,j,k=1}^n {\bf x}_{ijk}\lambda_i\lambda_j\lambda_k$ because it is linearly dependent from the vectors on the first and on the third line. 

Since we are in $\PP^{3n+2}$, the vectors in question have length $3n+3$, thus the above linear dependence condition is equivalent to the vanishing of the determinant having as columns the vectors in question. This way we have a single equation which may be interpreted as a second order differential equation in the variables $u_i(t)$, for $1\leq i\leq n$, which in turn determine the $\gamma_{1,5}$ quasi--asymptotic curves $C$ solutions of the problem. From this equation we can derive more equations for example by differentiating with respect to $\lambda_1,\ldots, \lambda_n$. Eventually we have a system of differential equations which, by the existence and uniqueness theorem of solution of differential equations, has a unique solution once one fixes the initial conditions, which consist in fixing the 0--dimensional, curvilinear scheme $\gamma$ of degree 3 such that the solution curve $C$ contains $\gamma$. In conclusion we find the following result:

\begin{thm}\label{thm:gamma} Let $X\subseteq \PP^{3n+2}$ be an irreducible, non--degenerate, projective variety of dimension $n$. Consider a chart ${\bf x}(u_1,\ldots, u_n)$ for an open subset $U$ of the smooth locus of $X$. Then a necessary and sufficient condition for the local existence in $U$ of a family of dimension $3(n-1)$ of $\gamma_{1,5}$'s with the property that given a general curvilinear, 0--dimensional scheme $\gamma$ of lenght 3 on $X$, there is a unique $\gamma_{1,5}$ on $X$ containing  $\gamma$, is the identical linear dependence of the vectors ${\bf x}, {\bf x}_1, \ldots, {\bf x}_n$ and the vectors in \eqref {eq:vectors}, where $(\lambda_1,\ldots, \lambda_n)\neq {\bf 0}$ and $(\mu_1,\ldots, \mu_n)$ stand for the first and second derivatives of the functions $u_i(t)$, $1\leq i\leq n$, as in \eqref{eq:funct}, giving a local parametrization of a $\gamma_{1,5}$ on $X$.
\end{thm}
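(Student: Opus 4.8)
The plan is to read the statement as an equivalence between condition (A): the existence of the $3(n-1)$-dimensional family of $\gamma_{1,5}$'s with a unique member through each length-$3$ scheme, and condition (B): the identical linear dependence of ${\bf x},{\bf x}_1,\ldots,{\bf x}_n$ together with the vectors of \eqref{eq:vectors}. The preceding computation already carries out the local bookkeeping: a curve $C$ parametrized as in \eqref{eq:funct} is $\gamma_{1,5}$ at its general point if and only if ${\bf x},{\bf x}_1,\ldots,{\bf x}_n,{\bf x}',\ldots,{\bf x}'''''$ are linearly dependent, and after discarding the components along $T_{X,p}$ and absorbing the free higher jets $\nu,\rho,\sigma$ this is equivalent to the linear dependence of the $3n+3$ vectors of \eqref{eq:vectors}. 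Since in $\PP^{3n+2}$ these are $3n+3$ vectors of length $3n+3$, their dependence is the single scalar equation $\Delta(p;\lambda,\mu)=0$, where $\Delta$ is the determinant of the indicated columns and depends only on the point $p$ and the $2$-jet datum $(\lambda,\mu)$. Thus the whole argument reduces to relating the pointwise vanishing of $\Delta$ to the existence of the family.

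For the direction (A) $\Rightarrow$ (B) I would argue as follows. If the family exists with a unique curve through each general length-$3$ scheme, then every such scheme $(p;\lambda,\mu)$ is the $2$-jet of some $\gamma_{1,5}$ curve $C$; the genuine higher jets $\nu,\rho,\sigma$ of $C$ then witness the full linear dependence, hence in particular the reduced one, so $\Delta(p;\lambda,\mu)=0$. A dimension count shows that these schemes are general: length-$3$ curvilinear schemes form a family of dimension $3n-2$ ($n$ for the support $p$ and $2(n-1)$ for the $2$-jet modulo reparametrization), and the incidence of a curve with a point on it fibers over the curves with $1$-dimensional fibers and maps generically bijectively to schemes, so a family of curves of dimension $3(n-1)=(3n-2)-1$ meets every general scheme. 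Hence $\Delta$ vanishes on a dense set of $(p;\lambda,\mu)$, and since it is analytic in $p$ and polynomial in $(\lambda,\mu)$ it vanishes identically, which is (B).

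For the converse (B) $\Rightarrow$ (A), assume $\Delta\equiv 0$ and read $\Delta(u,\dot u,\ddot u)=0$ as a second-order equation along an unknown curve $t\mapsto u(t)$, with $\lambda=\dot u$ and $\mu=\tfrac{1}{2}\ddot u$. A single scalar equation does not pin down the $n$-vector $\ddot u$, so I would augment it to a determined system by differentiating $\Delta=0$ with respect to the jet variables $\lambda_1,\ldots,\lambda_n$ (and, where needed, along $t$), producing the missing relations and casting the evolution of the $2$-jet as a Cauchy problem whose initial datum is the prescribed scheme $\gamma=(p;\lambda,\mu)$. The existence and uniqueness theorem for systems of analytic differential equations then yields a unique analytic solution curve $C$ through $\gamma$, and the augmenting equations are arranged so that the higher jets of $C$ are precisely the ones realizing the full dependence, so $C$ is genuinely $\gamma_{1,5}$. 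Letting $\gamma$ vary over the $(3n-2)$-dimensional family of schemes and accounting for the $1$-dimensional reparametrization along each solution produces a family of $\gamma_{1,5}$'s of dimension $3(n-1)$ with a unique member through each scheme, i.e. (A).

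The main obstacle is precisely the well-posedness invoked in the converse: one must verify that differentiating the master equation $\Delta=0$ produces a system of the correct rank, so that the initial $2$-jet determines the solution uniquely with no hidden integrability obstruction, and that the analytic curve so obtained satisfies the full $\gamma_{1,5}$ condition with its own derivatives, and not merely the reduced determinantal one. The numerical identity $3(n-1)=(3n-2)-1$ is the consistency check that the augmented system is of finite type with the right number of free parameters; making this precise, rather than any conceptual subtlety, is where the real work lies, in keeping with the computational character of the preceding sections.
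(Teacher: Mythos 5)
Your proposal is correct and follows essentially the same route as the paper: the reduction of the $\gamma_{1,5}$ condition to the determinant $\Delta(p;\lambda,\mu)=0$ of the $3n+3$ vectors, necessity by letting the $2$-jet vary generically, and sufficiency by reading $\Delta=0$ as a second-order ODE, augmenting it by differentiation with respect to $\lambda_1,\ldots,\lambda_n$, and invoking existence and uniqueness with the length-$3$ scheme as initial datum, together with the count $3(n-1)=(3n-2)-1$. The well-posedness issue you flag as the ``main obstacle'' is treated at exactly the same level of detail (i.e., asserted rather than verified) in the paper itself.
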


We stress that the above theorem is local in nature, i.e., the family of dimension $3(n-1)$ of $\gamma_{1,5}$'s on $X$ exists locally in the smooth locus of $X$ around a given smooth point of $X$. Moreover, the quasi--asymptotic curves $\gamma_{1,5}$ of the family are analytic and in general not algebraic.

Next we finish this section with the following proposition:

\begin{proposition}\label{prop:equal} Let $X\subseteq \PP^{3n+2}$ be an irreducible, non--degenerate, projective variety of dimension $n$. The following propositions are equivalent:\\\begin{inparaenum} 
\item [(i)] $X$ possesses (locally) a family of dimension $3(n-1)$ of $\gamma_{1,5}$'s with the property that given a general curvilinear, 0--dimensional scheme $\gamma$ of lenght 3 on $X$, there is a unique $\gamma_{1,5}$ of the family containing  $\gamma$;\\
 \item [(ii)] for a general curvilinear, 0--dimensional scheme $\gamma$ of lenght 3 on $X$, $X$ is special along $\gamma$.
 \end{inparaenum}
\end{proposition}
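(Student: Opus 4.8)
The plan is to show that conditions (i) and (ii) are two ways of expressing the vanishing of one and the same $(3n+3)\times(3n+3)$ determinant, the first asking for it ``identically'' and the second only ``generically,'' and to bridge the two by analyticity.

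First I would normalize a general $\gamma$ so that the jet \eqref{eq:jet} has $(\lambda_1,\ldots,\lambda_n)=(1,0,\ldots,0)$ and $\mu_1=0$; this is legitimate for a general $\gamma$ after a linear change of the chart coordinates $u_i$ and a reparametrization of the jet, and since speciality of $X$ along $\gamma$ is intrinsic it is unaffected by these changes. By Proposition \ref{prop:tan} the space $T_{X,\gamma}$ is then spanned by the $3n+3$ explicit vectors in \eqref{eq:part}. As $r=3n+2$, these are $3n+3$ vectors of length $3n+3$, so $X$ is special along $\gamma$ precisely when they are linearly dependent, i.e. when the determinant $D$ having them as columns vanishes.

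The heart of the argument is to identify $D$ with the determinant of Theorem \ref{thm:gamma}. I would specialize the vectors of \eqref{eq:vectors} to $\lambda=(1,0,\ldots,0)$, $\mu_1=0$. A direct substitution gives: the first line $\sum_i {\bf x}_{ij}\lambda_i$ becomes ${\bf x}_{1j}$, reproducing ${\bf x}_{11},\ldots,{\bf x}_{1n}$; the third line $2\sum_i {\bf x}_{ik}\mu_i+\sum_{i,j}{\bf x}_{ijk}\lambda_i\lambda_j$ becomes $2\sum_{i\geq 2}{\bf x}_{ik}\mu_i+{\bf x}_{11k}$, which for $k\geq 2$ gives the vectors $2\sum_{i\geq 2}{\bf x}_{ih}\mu_i+{\bf x}_{11h}$ of \eqref{eq:part}, while for $k=1$ it equals $2\sum_{i\geq 2}{\bf x}_{1i}\mu_i+{\bf x}_{111}$, which reduces to ${\bf x}_{111}$ after subtracting the multiples $2\mu_i{\bf x}_{1i}$ of columns already present; and the second and fourth lines become exactly the last two generators in \eqref{eq:part}. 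Hence the $3n+3$ vectors of Theorem \ref{thm:gamma} and those of Proposition \ref{prop:tan} differ only by column operations (and a reordering) that leave the determinant unchanged up to sign, so $D$ is, up to sign, exactly the determinant whose vanishing governs the $\gamma_{1,5}$ condition.

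With this identification the equivalence is the distinction between identical and generic vanishing. By Theorem \ref{thm:gamma}, condition (i) is the \emph{identical} vanishing of $D$ as a function of the base point $p\in U$ and of all admissible parameters $(\lambda,\mu)$ with $\lambda\neq{\bf 0}$, whereas condition (ii) is the vanishing of $D$ for a \emph{general} $\gamma$, i.e. for general $p$ and general normalized $\mu=(0,\mu_2,\ldots,\mu_n)$. The implication (i)$\Rightarrow$(ii) is immediate. For the converse I would use that $D$ is analytic in $p$ and polynomial in $\mu$: vanishing on a dense subset of the parameter space forces $D\equiv 0$ by the identity theorem, and since a general $\gamma$ also has general tangent direction, the covariance of the construction under the coordinate changes used to normalize $\lambda$ propagates the identical vanishing to all $(\lambda,\mu)$ and all $p\in U$, which is exactly condition (i). The main obstacle is the bookkeeping of the third paragraph—checking column by column that the specialized vectors of \eqref{eq:vectors} reproduce the generators of $T_{X,\gamma}$ beyond $T_{X,p}$—together with making precise that ``general $\gamma$'' corresponds to general $(\lambda,\mu,p)$, so that the identity theorem legitimately upgrades generic to identical vanishing.
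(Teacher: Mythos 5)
Your proposal is correct and follows essentially the same route as the paper's proof: normalize the jet, invoke Proposition \ref{prop:tan} to express speciality along $\gamma$ as linear dependence of the vectors in \eqref{eq:part}, and identify this with the linear dependence condition of Theorem \ref{thm:gamma} governing the $\gamma_{1,5}$ family. The paper states the identification of the two sets of vectors and the passage from generic to identical vanishing without detail, whereas you spell out the column operations and the identity-theorem step explicitly---these are refinements, not a different argument.
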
  

\begin {proof} Fix a general point $p\in X$ and a chart ${\bf x}(u_1,\ldots,u_n)$ of $X$ at $p$. Consider $\gamma$ a $0$--dimensional curvilinear length 3 scheme supported at $p$, determined by a {second order jet} given by equations \eqref {eq:jet}. Then Proposition \ref {prop:tan} tells us that $T_{X,\gamma}$ is as in \eqref {eq:part}. Hence $T_{X,\gamma}$ is special if and only if the vectors appearing in \eqref {eq:part} are linearly dependent. But this is exactly the same as the conditions that the vectors ${\bf x}, {\bf x}_1, \ldots, {\bf x}_n$ and the vectors appearing in \eqref {eq:vectors} are linearly dependent. This proves the assertion. 
\end{proof}

\section{Conditions for $2$--osculating regularity}\label{sec:osc}

Let $X\subseteq \PP^{r}$ be an irreducible, non--degenerate, projective variety of dimension $n$ and recall the definition of the variety of $m$--osculating spaces to $X$ from the Introduction.

We focus on the case $m=2$ and $r\geq 3n$ and we want to give conditions for the $2$--osculating regularity of $X$. As usual, consider a chart ${\bf x}(u_1,\ldots,u_n)$ of $X$ around a general point of $X$. Then a parametrization of an open subset of ${\rm Osc}_2(X)$ is given by
\[
{\bf y}(u_1,\ldots, u_n, \lambda_2, \ldots, \lambda_n,\mu_2, \ldots, \mu_n, \alpha, \beta)=
{\bf x}+\alpha\sum_{i=1}^n{\bf x}_i\lambda_i+\beta \big (\sum_{i,j=1}^n  {\bf x}_{ij}\lambda_i\lambda_j+2\sum_{i=1}^n\mu_i{\bf x}_i \big)
\]
where we set $\lambda_1=1$ and $\mu_1=0$ and ${\bf x}$ and its derivatives are functions of $u_1,\ldots, u_n$. The conditions for the $2$--osculating regularity of $X$ consist in asking that ${\bf y}$ and its derivatives with respect to $u_1,\ldots, u_n, \lambda_2, \ldots, \lambda_n,\mu_2, \ldots, \mu_n, \alpha, \beta$ are linearly independent for general values of the variables. After a standard computation one arrives at the following:

\begin{proposition}\label{prop:reg} Let $X\subseteq \PP^{r}$ be an irreducible, non--degenerate, projective variety of dimension $n$, with $r\geq 3n$. Consider a chart ${\bf x}(u_1,\ldots,u_n)$ of $X$ around a general point. 
Then $X$ is $2$--osculating regular if and only if for general values of the variables $u_1,\ldots, u_n, \lambda_2, \ldots, \lambda_n,\mu_2, \ldots, \mu_n$ the $3n+1$ points with homogeneous coordinates given by the following vectors
\begin{equation}\label{eq:vec}
\begin{split}
& {\bf x}, {\bf x}_1, \ldots, {\bf x}_n,\\
&\sum_{i=1}^n  {\bf x}_{ij}\lambda_i, \quad \text{for}\quad 1\leq j\leq n,\\
& \sum_{i,j=1}^n  {\bf x}_{kij}\lambda_i\lambda_j+2 \sum_{i=2}^n {\bf x}_{kj}\mu_j,\quad \text{for}\quad 1\leq k\leq n.
\end{split}
\end{equation}
are linearly independent.
\end{proposition}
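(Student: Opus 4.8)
The plan is to read off $2$--osculating regularity as a maximal--rank condition on the differential of ${\bf y}$ and then to identify, up to an invertible change of basis, the resulting family of tangent vectors with the $3n+1$ vectors of \eqref{eq:vec}. The parameters $u_1,\ldots,u_n,\lambda_2,\ldots,\lambda_n,\mu_2,\ldots,\mu_n,\alpha,\beta$ number $n+(n-1)+(n-1)+2=3n$, and ${\rm Osc}_2(X)\subseteq\PP^r$ with $r\geq 3n$ is the image of the map ${\bf y}$. Hence $X$ is $2$--osculating regular exactly when ${\bf y}$ is generically an immersion, i.e. when its differential has rank $3n$ at a general parameter value. By the standard description of the embedded projective tangent space to a parametrized variety (the span of the parametrizing vector together with its first partials), this is equivalent to the linear independence, for general values of the variables, of the $3n+1$ vectors ${\bf y}$, $\partial{\bf y}/\partial\alpha$, $\partial{\bf y}/\partial\beta$, the $\partial{\bf y}/\partial\lambda_j$ and $\partial{\bf y}/\partial\mu_j$ for $2\leq j\leq n$, and the $\partial{\bf y}/\partial u_k$ for $1\leq k\leq n$.

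Next I would compute these partials, using $\lambda_1=1$, $\mu_1=0$ and the symmetry of ${\bf x}_{ij}$ and ${\bf x}_{ijk}$:
\[
\begin{gathered}
\frac{\partial{\bf y}}{\partial\alpha}=\sum_{i=1}^n{\bf x}_i\lambda_i,\qquad
\frac{\partial{\bf y}}{\partial\beta}=\sum_{i,j=1}^n{\bf x}_{ij}\lambda_i\lambda_j+2\sum_{i=1}^n\mu_i{\bf x}_i,\qquad
\frac{\partial{\bf y}}{\partial\mu_j}=2\beta\,{\bf x}_j,\\
\frac{\partial{\bf y}}{\partial\lambda_j}=\alpha\,{\bf x}_j+2\beta\sum_{i=1}^n{\bf x}_{ij}\lambda_i,\qquad
\frac{\partial{\bf y}}{\partial u_k}={\bf x}_k+\alpha\sum_{i=1}^n{\bf x}_{ik}\lambda_i+\beta\Big(\sum_{i,j=1}^n{\bf x}_{ijk}\lambda_i\lambda_j+2\sum_{i=1}^n\mu_i{\bf x}_{ik}\Big),
\end{gathered}
\]
for $2\leq j\leq n$ and $1\leq k\leq n$. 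Writing $V_j:=\sum_i{\bf x}_{ij}\lambda_i$ and ${\bf w}_k:=\sum_{ij}{\bf x}_{ijk}\lambda_i\lambda_j+2\sum_i\mu_i{\bf x}_{ik}$, each of these vectors is manifestly a linear combination of ${\bf x}$, the ${\bf x}_k$, the $V_j$ and the ${\bf w}_k$, that is, of the $3n+1$ vectors in \eqref{eq:vec}; in particular $\partial{\bf y}/\partial u_k={\bf x}_k+\alpha V_k+\beta{\bf w}_k$.

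The heart of the argument is the reverse inclusion, which I would carry out by explicit triangular elimination valid whenever $\alpha\beta\neq 0$ (hence at a general parameter value): from $\partial{\bf y}/\partial\mu_j=2\beta{\bf x}_j$ recover ${\bf x}_j$ for $2\leq j\leq n$; substituting into $\partial{\bf y}/\partial\lambda_j$ recover $V_j$ for $2\leq j\leq n$; from $\partial{\bf y}/\partial\alpha=\sum_i{\bf x}_i\lambda_i$ and $\lambda_1=1$ recover ${\bf x}_1$; from $\partial{\bf y}/\partial\beta=\sum_j\lambda_jV_j+2\sum_i\mu_i{\bf x}_i$ recover the remaining $V_1$; then $\partial{\bf y}/\partial u_k={\bf x}_k+\alpha V_k+\beta{\bf w}_k$ yields every ${\bf w}_k$; and finally ${\bf y}={\bf x}+\alpha\,\partial{\bf y}/\partial\alpha+\beta\,\partial{\bf y}/\partial\beta$ yields ${\bf x}$. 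Thus the span of the $3n+1$ tangent vectors coincides with the span of the $3n+1$ vectors of \eqref{eq:vec} in $\CC^{r+1}$.

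Since both families consist of exactly $3n+1$ vectors and span the same subspace, one is linearly independent if and only if the other is; combined with the first paragraph this gives the proposition. I expect the only genuinely delicate point of the elimination to be the recovery of $V_1=\sum_i{\bf x}_{i1}\lambda_i$: it occurs in no single partial in isolated form and must be extracted from $\partial{\bf y}/\partial\beta$ only after the $V_j$ with $j\geq 2$ and all the ${\bf x}_i$ are already in hand, after which $\partial{\bf y}/\partial u_1={\bf x}_1+\alpha V_1+\beta{\bf w}_1$ closes the loop by producing ${\bf w}_1$. The remainder is the routine index bookkeeping that the statement refers to as a standard computation.
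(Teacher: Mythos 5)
Your proposal is correct and follows exactly the route the paper takes: the paper reduces $2$--osculating regularity to the linear independence of ${\bf y}$ and its partials with respect to $u_1,\ldots,u_n,\lambda_2,\ldots,\lambda_n,\mu_2,\ldots,\mu_n,\alpha,\beta$, and then invokes ``a standard computation'' to pass to the vectors of \eqref{eq:vec}. Your explicit computation of the partials and the triangular elimination (valid for $\alpha\beta\neq 0$) showing the two families of $3n+1$ vectors have the same span is precisely that standard computation, carried out in full.
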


\begin{remark}\label{rem:lap} Recall that we can always assume that a general $0$--dimensional curvilinear, scheme of degree 3 is determined by a jet with equation \eqref {eq:jet} and 
$\lambda_1=1, \lambda_i=0$ for $2\leq i\leq n$. In this case the vectors in \eqref {eq:vec} take a simpler form. 

Let us assume that the jet in question belongs to the coordinate curve with equations $u_1=t, u_i=0$ for $2\leq i\leq n$, i.e., in addition $\mu_i=0$ for $1\leq i\leq n$. Then the vectors in \eqref {eq:vec}  are 
\[
 {\bf x}, {\bf x}_1, \ldots, {\bf x}_n,
{\bf x}_{11}, \ldots, {\bf x}_{1n},
{\bf x}_{111}, \ldots, {\bf x}_{11n}
\]
and if they are independent then  $X$ is $2$--osculating regular. 
\end{remark}

\section{The infinitesimal Terracini lemma}\label{sec:terra}

In this section we prove our main result, i.e., the theorem:

\begin{thm}\label{thm:main} Let $X\subseteq \PP^r$ be an irreducible, non--degenerate, projective variety of dimension $n$ with $r\geq 3n+2$. Suppose that:\\ \begin {inparaenum}
\item [(i)] for every $0$--dimensional, curvilinear scheme $\gamma$ of length 3 contained in the smooth locus of $X$, then $X$ is special along $\gamma$;\\
\item [(ii)]  $X$ is $2$--osculating regular.
\end{inparaenum}

Then $X$ is $2$--defective. 
\end{thm}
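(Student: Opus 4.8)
The plan is to exhibit a single point of ${\rm Sec}_2(X)$ whose (embedded, projective) tangent space is forced to lie in a proper linear subspace of $\PP^{3n+2}$, and then to invoke the inequality $\dim({\rm Sec}_2(X))\leq \dim(T_{{\rm Sec}_2(X),z})$, valid at every point $z\in {\rm Sec}_2(X)$. First I would reduce to $r=3n+2$, as in Section \ref{sec:qacurves}: a general projection to $\PP^{3n+2}$ preserves both hypotheses — since $2$--osculating regularity and speciality along a length $3$ scheme are governed by jets of order $\leq 5$ and by osculating and tangential spaces of dimension at most $3n<3n+2$ — and it preserves $2$--defectivity. In this range $\tau_{n,3}=3n+2$, and by Terracini's Lemma (Theorem \ref{thm:lemma terra}) $X$ is $2$--defective if and only if ${\rm Sec}_2(X)\neq \PP^{3n+2}$, i.e. $\dim({\rm Sec}_2(X))\leq 3n+1$.

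As test point I would take $z_0$ a general point of ${\rm Osc}_2(X)$. By hypothesis (ii) and \eqref{eq:oscul} we have $\dim({\rm Osc}_2(X))=3n$, and ${\rm Osc}_2(X)\subseteq {\rm Sec}_2(X)$ because an osculating plane to a curve is a limit of $3$--secant planes; hence $z_0\in {\rm Sec}_2(X)$. Using hypothesis (i) and Proposition \ref{prop:equal}, $X$ carries the $3(n-1)$--dimensional family of $\gamma_{1,5}$'s, and I would arrange $z_0$ to lie on the osculating plane $\langle {\bf x},{\bf x}',{\bf x}''\rangle$ at a general point $p$ of such a curve $C$, writing $\gamma$ for the length $3$ curvilinear scheme $C$ cuts at $p$, parametrized by the jet \eqref{eq:jet}. (A dimension count shows that the osculating planes of the $\gamma_{1,5}$'s already fill up ${\rm Osc}_2(X)$, so this is harmless.) The theorem then reduces to the claim that \emph{every hyperplane $\pi$ singular along $\gamma$ is tangent to ${\rm Sec}_2(X)$ at $z_0$}, i.e. $\pi\supseteq T_{{\rm Sec}_2(X),z_0}$. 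Granting this, intersecting over all such $\pi$ and using the very definition of $T_{X,\gamma}$ from Section \ref{sec:1} gives $T_{{\rm Sec}_2(X),z_0}\subseteq T_{X,\gamma}$; since $X$ is special along $\gamma$, $\dim(T_{X,\gamma})\leq 3n+1$, whence $\dim({\rm Sec}_2(X))\leq \dim(T_{{\rm Sec}_2(X),z_0})\leq \dim(T_{X,\gamma})\leq 3n+1$, which is exactly $2$--defectivity.

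To prove the claim I would test tangency of $\pi=\{{\bf a}\cdot {\bf X}=0\}$ along analytic arcs of ${\rm Sec}_2(X)$ issuing from $z_0$. Writing such an arc as $\zeta(\varepsilon)=\sum_{i=0}^{2}a_i(\varepsilon){\bf x}(P_i(\varepsilon))$ with $\zeta(0)=z_0$, the restriction of $\pi$ becomes $\sum_i a_i(\varepsilon)\,g(P_i(\varepsilon))$ with $g:={\bf a}\cdot{\bf x}$, and one must show it vanishes to order $\geq 2$ at $\varepsilon=0$. By Proposition \ref{prop:tan} the hypothesis that $\pi$ is singular along $\gamma$ is precisely the orthogonality of ${\bf a}$ to all the generators \eqref{eq:part} of $T_{X,\gamma}$; in particular $g$ vanishes to order $\geq 3$ along $C$ and to order $\geq 2$ on $X$ at $p$, with quadratic part degenerate in the ${\bf x}'$ direction. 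Since $z_0$ is an osculating point, the arcs reaching it do so by a coalescence of the three points towards $p$ compensated by a blow--up of the coefficients $a_i$; passing to divided--difference coordinates adapted to $C$ and to transverse directions in $X$ keeps everything finite and turns the leading terms of $\sum_i a_i g(P_i)$ into the pairings of ${\bf a}$ against exactly the $3n+3$ vectors of \eqref{eq:vectors}. Here $2$--osculating regularity enters through Proposition \ref{prop:reg}: the $3n+1$ vectors \eqref{eq:vec} are independent, so $z_0$ is a genuine general point of the $3n$--dimensional ${\rm Osc}_2(X)$ and the lower--order part of the coalescence is controlled, leaving only the two top--order directions — the fourth-- and fifth--order vectors of \eqref{eq:vectors}, which separate $T_{X,\gamma}$ from the osculating span. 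Singularity along $\gamma$ annihilates ${\bf a}$ against all of these, forcing the order of vanishing to be $\geq 2$.

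The main obstacle is this coalescence computation, and in particular the need to control \emph{every} arc of ${\rm Sec}_2(X)$ through $z_0$, not merely those degenerating along $C$. This is the point where the $\gamma_{1,5}$ structure of $C$ is indispensable: the fifth--order osculation built into the $\gamma_{1,5}$ condition is what matches the highest divided--difference terms produced when three points collapse to second order, so that the last vector of \eqref{eq:vectors} — the one carrying ${\bf x}'''''$ — is exactly the direction that must be killed, and the linear dependence furnished by Proposition \ref{prop:equal} is what kills it. The bookkeeping of these higher--order terms, together with the verification that no further tangent direction to ${\rm Sec}_2(X)$ at $z_0$ escapes $T_{X,\gamma}$, is the delicate and computationally heavy heart of the argument; it is the exact analogue, one order higher, of Terracini's original analysis for secant lines.
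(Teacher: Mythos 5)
Your strategy has a genuine gap, and it is not the ``computationally heavy'' coalescence bookkeeping you flag at the end --- it is the reduction itself. The claim you reduce to (``every hyperplane singular along $\gamma$ contains $T_{{\rm Sec}_2(X),z_0}$'') is not an intermediate step: it is at least as strong as the theorem. Indeed, if the conclusion fails, i.e. ${\rm Sec}_2(X)=\PP^{3n+2}$, then $T_{{\rm Sec}_2(X),z_0}=\PP^{3n+2}$ at \emph{every} point, while hyperplanes singular along $\gamma$ do exist under hypothesis (i); so your claim is then false, and any proof of it must already contain a proof of defectivity. This is why no local analysis in a chart at $p$ can establish it: the Zariski tangent space at $z_0$ --- and even the tangent cone, which is what arcs detect and which would suffice for the bound $\dim({\rm Sec}_2(X))\leq\dim(T_{X,\gamma})$ --- receives contributions from \emph{all} local branches of ${\rm Sec}_2(X)$ at $z_0$, not only from the branch swept out by secant planes whose three contact points coalesce onto $\gamma$. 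Arcs $\zeta(\varepsilon)=\sum_i a_i(\varepsilon){\bf x}(P_i(\varepsilon))$ in which the $P_i(\varepsilon)$ stay at mutual distance bounded away from zero (or sit far from $p$ altogether) produce tangent directions lying in spans $\langle T_{X,q_0},T_{X,q_1},T_{X,q_2}\rangle$ at three distinct points, and neither hypothesis (i) nor (ii) --- both being conditions on curvilinear schemes, hence local differential conditions on $X$ --- says anything about such spans; controlling them is precisely the content of the theorem. Such configurations exist in abundance exactly in the non--defective case you must exclude, so your acknowledgement that one must ``control every arc of ${\rm Sec}_2(X)$ through $z_0$'' is not a technical caveat but a circularity.

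The paper's proof avoids this trap by never computing a tangent space of ${\rm Sec}_2(X)$ at a special point. It uses hypothesis (i), via Proposition \ref{prop:equal} and Theorem \ref{thm:gamma}, to produce the local $3(n-1)$--dimensional family of $\gamma_{1,5}$'s; the technical heart is then Claim \ref{cl:main}, proved by extracting coefficients of the identically vanishing determinant built from \eqref{eq:vectors}, which shows that the span $\Pi$ of a general $\gamma_{1,5}$ is a \emph{fixed} linear space of dimension at most $3n+1$ containing $T_{X,q}$ for every point $q$ of the curve. Defectivity then follows by applying the classical Terracini Lemma \ref{thm:lemma terra} to three honest general nearby points $p,q,r$ on one such curve, since $\langle T_{X,p},T_{X,q},T_{X,r}\rangle\subseteq\Pi$. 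That constancy--of--span statement is exactly the global input your approach is missing; with it in hand the detour through $z_0$ and Zariski tangent spaces becomes unnecessary, and without it the detour does not close.
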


\begin{proof} First of all, after may be generically projecting down $X$ to $\PP^{3n+2}$, we may and will assume $r=3n+2$.

By the hypothesis (i) and Proposition \ref {prop:equal}, $X$ possesses, in the neighborhood of any smooth point, a family of dimension $3(n-1)$ of $\gamma_{1,5}$'s with the property that given a general curvilinear, 0--dimensional scheme $\gamma$ of length 3 on $X$, there is a unique $\gamma_{1,5}$ on $X$ containing  $\gamma$. Fix, as usual, a chart ${\bf x}(u_1,\ldots,u_n)$ of $X$ around a general point $p$ of $X$. We may require that the $u_1$--curves defined by $u_i={\rm const.}$, for $2\leq i\leq n$, are $\gamma_{1,5}$'s, and they are actually general $\gamma_{1,5}$'s. In particular, let us fix $u_i=0$, for $2\leq i\leq n$, so that we have the $\gamma_{1,5}$ curve $C$ parametrized by ${\bf x}(u_1,0\ldots,0)$. Then by Theorem \ref {thm:gamma}, we have that the $3n+3$ vectors 
\begin{equation}\label{eq:vv}
{\bf x}, {\bf x}_1, \ldots, {\bf x}_n,{\bf x}_{11}, \ldots, {\bf x}_{1n}, {\bf x}_{111}, \ldots, {\bf x}_{11n}, {\bf x}_{1111}, {\bf x}_{11111}
\end{equation}
are linearly dependent. However, by Remark \ref {rem:lap}, the first $3n+1$ vectors are linearly independent because of the hypothesis (ii). We denote by $\Pi$ the linear span of the points with coordinate vectors in \eqref {eq:vv} and by $\pi$ the linear span of the points with coordinate vectors the first $3n+1$ vectors in \eqref {eq:vv}. Then we have
\[
3n=\dim (\pi)\leq \dim(\Pi)\leq 3n+1.
\]
We will assume that $\dim(\Pi)=3n+1$, the proof running  in the same way (and in fact it is easier) if $\dim(\Pi)=3n$, i.e., if $\Pi=\pi$. Then we may assume that $\Pi$ is spanned by the points with coordinate vectors the first $3n+2$ vectors in \eqref {eq:vv} and ${\bf x}_{11111}$ depends on them. 

We note that $\Pi$ a priori depends on the variable $u_1$ which parametrizes the curve $\gamma_{1,5}$. However we make the following claim:

\begin {claim}\label{cl:main} The space $\Pi$ is constant with respect to $u_1$. 
\end{claim}

\begin{proof}[Proof of Claim \ref {cl:main}]

To prove the claim it suffices to prove that the derivatives with respect to $u_1$ of the first $3n+2$ vectors in \eqref {eq:vv} belong to span of the same vectors. This is clear for the derivatives of the vectors 
\[
{\bf x}, {\bf x}_1, \ldots, {\bf x}_n,{\bf x}_{11}, \ldots, {\bf x}_{1n}, {\bf x}_{111}, {\bf x}_{1111}
\]
and we have to prove it for the derivatives of the vectors ${\bf x}_{11h}$, for $2\leq h\leq n$. Namely we have to prove that the vectors ${\bf x}_{111h}$, for $2\leq h\leq n$, depend on the first $3n+2$ vectors in \eqref {eq:vv}. 

To see this, consider the determinant $D$ of the square matrix of size $3n+3$ whose columns are the vectors ${\bf x}, {\bf x}_1, \ldots, {\bf x}_n$ plus the vectors in \eqref {eq:vectors}. The determinant of this matrix is a polynomial in the variables $\lambda_i, \mu_i$, for $1\leq i\leq n$, which is identically zero, hence all the coefficients of its monomials have to be  zero. Let us consider the coefficient of the monomial $\lambda_1^{3n+7}\mu_2$ in $D$. To ease notation we denote by $S$ the ordered string of vectors ${\bf x}, {\bf x}_1, \ldots, {\bf x}_n,{\bf x}_{11}, \ldots, {\bf x}_{1n}$. The coefficient in question, which is zero, is a sum of determinants, some of which are clearly zero because two columns are equal. Writing the remaining terms, we get the relation
\begin{equation}\label{eq:part}
|S \,\, {\bf x}_{1111}\, {\bf x}_{111} \ldots {\bf x}_{11n}\, {\bf x}_{1112}|  
+2 \sum_{h=2}^n |S\,\, {\bf x}_{1111}\, {\bf x}_{111} \ldots {\bf x}_{11h-1}\,{\bf x}_{2h}\,{\bf x}_{11h+1}\ldots {\bf x}_{11n} {\bf x}_{11111} |\equiv 0.
\end{equation}
Next we compute the coefficient in $D$ of the monomial $\lambda_1^{3n+6}\lambda_2\mu_1$. Again, to ease notation we denote by $S'$ the ordered string of vectors ${\bf x}, {\bf x}_1, \ldots, {\bf x}_n$. 
As above we get the relation
\[
\begin{split}
&\qquad\qquad\qquad 0\equiv  |S \,\, {\bf x}_{1111}\, {\bf x}_{111} \ldots {\bf x}_{11n}\, {\bf x}_{1112}| +|S\,\, {\bf x}_{1112}\,{\bf x}_{111} \ldots {\bf x}_{11n}\, {\bf x}_{1111}|+\\
&+ 2\sum_{h=2}^n |S'\,\, {\bf x}_{11}, \ldots, {\bf x}_{1h-1}\, {\bf x}_{2h}\, {\bf x}_{1h+1}\ldots {\bf x}_{1n}\,
{\bf x}_{1111}\, {\bf x}_{111} \ldots {\bf x}_{11h-1}\,{\bf x}_{1h}\,{\bf x}_{11h+1}\ldots {\bf x}_{11n} {\bf x}_{11111} |.
\end{split}
\]
Now notice the the first and second term of this sum are opposite to each other, so they cancel out. In the  sum we have, for every $h\in \{2,\ldots, n\}$, that 
\[
\begin{split}
&|S'\,\, {\bf x}_{11}, \ldots, {\bf x}_{1h-1}\, {\bf x}_{2h}\, {\bf x}_{1h+1}\ldots {\bf x}_{1n}\,
{\bf x}_{1111}\, {\bf x}_{111} \ldots {\bf x}_{11h-1}\,{\bf x}_{1h}\,{\bf x}_{11h+1}\ldots {\bf x}_{11n} {\bf x}_{11111} |=\\
& \qquad\qquad\qquad=-|S\,\, {\bf x}_{1111}\, {\bf x}_{111} \ldots {\bf x}_{11h-1}\,{\bf x}_{2h}\,{\bf x}_{11h+1}\ldots {\bf x}_{11n}\, {\bf x}_{11111}|.
\end{split}
\]
So the summation in \eqref {eq:part} vanishes and therefore also $|S \,\, {\bf x}_{1111}\, {\bf x}_{111} \ldots {\bf x}_{11n}\, {\bf x}_{1112}| \equiv 0$. This proves that ${\bf x}_{1112}$  depends on the first $3n+2$ vectors in \eqref {eq:vv}. In a similar manner we see that also the vectors ${\bf x}_{111h}$, for $3\leq h\leq n$, depend on the first $3n+2$ vectors in \eqref {eq:vv}, and this proves the claim.\end{proof} 

Now we remark that the  $\gamma_{1,5}$ curve $C$ parametrized by ${\bf x}(u_1,0\ldots,0)$, which is a general $\gamma_{1,5}$ on $X$, spans the subspace $\Pi$ of dimension $3n+1$.

Moreover $\Pi$ contains also the tangent space to $X$ at the general point of $C$. In fact the point $p$ corresponding to $(u_1,\ldots, u_n)={\bf 0}$ is a general point of $X$ and $T_{X,p}=\langle {\bf x}, {\bf x}_1, \ldots, {\bf x}_n\rangle$. The assertion follows since the derivatives of the vectors ${\bf x}, {\bf x}_1, \ldots, {\bf x}_n$ with respect to $u_1$ still depend on the first $3n+2$ vectors in \eqref {eq:vv}, hence they give points belonging to $\Pi$.  

To finish the proof, showing that $X$ is $2$--secant defective, we take a general point $p\in X$ and two general points $q$, $r$ of $X$ sufficiently close to $X$ in the natural topology. Then there is a $\gamma_{1,5}$ curve $\Gamma$ containing $p,q,r$, which by the generality of the points $p,q,r$ is a general $\gamma_{1,5}$ on $X$. The span of $\Gamma$ is a linear space of dimension $3n+1$ and it contains the tangent spaces $T_{X,p}, T_{X,q}, T_{X,r}$. Hence 
\[
\dim \Big ( \langle T_{X,p}, T_{X,q}, T_{X,r}\rangle \Big )\leq 3n+1
\]
and $X$ is $2$--secant defective by the classical Terracini Lemma \ref {thm:lemma terra}.
\end{proof}

\printindex

\end{document}